\theoremstyle{definition}
\newtheorem{axiom}{Axiom}[section]
\theoremstyle{remark}
\theoremstyle{plain}
\newtheorem{theorem}{Theorem}
\newtheorem{corollary}{Corollary}
\newcommand{\paren}[1]{\left(#1\right)}
\newcommand{\bal}{\begin{align*}}
\newcommand{\eal}{\end{align*}}
\newcommand{\bpm}{\begin{pmatrix}}
\newcommand{\epm}{\end{pmatrix}}
\newcommand{\bfp}{\mathbf{P}}
\newcommand{\bfr}{\mathbf{R}}
\newcommand{\balign}{\begin{align*}}
\newcommand{\ealign}{\end{align*}}
\newcommand{\eq}[2]{\begin{equation} #2 \label{#1} \end{equation}}
\begin{document}

\begin{frontmatter}

\title{Probability Theory without Bayes' Rule}
\runtitle{Probability Theory without Bayes' Rule}

\author{\fnms{Samuel G.} \snm{Rodriques}\corref{}\ead[label=e1]{sgr@mit.edu}\thanksref{t1,m1}}
\thankstext{t1}{The author thanks Maxim Rabinovich, Sina Tootoonian, David Turban, and Steven Lindell for helpful discussions and feedback.}
\thankstext{m1}{The author gratefully acknowledges the support of the Winston Churchill Foundation of the United States, and the Fannie and John Hertz Foundation.}
\address{Synthetic Neurobiology Group\\MIT Media Lab, 75 Amherst Street\\
Cambridge, MA, 02139\\\printead{e1}}
\affiliation{Department of Physics, Massachusetts Institute of Technology}

\runauthor{S.G. Rodriques}

\begin{abstract}
Within the Kolmogorov theory of probability, Bayes' rule allows one to perform statistical inference by relating conditional probabilities to unconditional probabilities. As we show here, however, there is a continuous set of alternative inference rules that yield the same results, and that may have computational or practical advantages for certain problems. We formulate generalized axioms for probability theory, according to which the reverse conditional probability distribution $P(B|A)$ is not specified by the forward conditional probability distribution $P(A|B)$ and the marginals $P(A)$ and $P(B)$. Thus, in order to perform statistical inference, one must specify an additional ``inference axiom,'' which relates $P(B|A)$ to $P(A|B)$, $P(A)$, and $P(B)$. We show that when Bayes' rule is chosen as the inference axiom, the axioms are equivalent to the classical Kolmogorov axioms. We then derive consistency conditions on the inference axiom, and thereby characterize the set of all possible rules for inference. The set of ``first-order'' inference axioms, defined as the set of axioms in which $P(B|A)$ depends on the first power of $P(A|B)$, is found to be a 1-simplex, with Bayes' rule at one of the extreme points. The other extreme point, the ``inversion rule,'' is studied in depth.
\end{abstract}



\end{frontmatter}

\section{Introduction}

 Let $A$ and $B$ be random variables taking values $a_1,\ldots,a_{N_A}$ and $b_1,\ldots,b_{N_B}$, respectively, and let $P(a_i,b_j)$ be the joint probability distribution for $A$ and $B$, which may be represented as a matrix $\bfp(A,B)$ of size $N_A \times N_B$. The conditional probability distribution $P(a_i|b_j)$ may likewise be represented as an $N_A \times N_B$ matrix, denoted $\bfp(A|B)$, with the $i$,$j$th component given according to the standard Kolmogorov axioms of probability~\cite{Kolmogorov33} by
\eq{CondProbDef}{P(a_i, b_j) = P(a_i | b_j) P(b_j).}
In the Kolmogorov framework, $P(a_i, b_j)$ is viewed as the measure of the intersection of sets representing $a_i$ and $b_j$. It thus follows from the symmetry of the intersection operator that $P(a_i, b_j) = P(b_j, a_i)$. From the definition of conditional probabilities we can thus derive Bayes' rule, which gives the relationship between $P(A|B)$ and $P(B|A)$:
\eq{BayesRuleFirst}{P(a_i | b_j) = \frac{P(b_j | a_i) P(a_i)}{P(b_j)}.}

However, Bayes' rule gives rise to some surprising behavior. By unitarity and additivity, it follows that
\eq{}{P(a_i) = \sum_j P(a_i, b_j),}
and hence
\eq{eq4}{P(a_i) = \sum_j P(a_i|b_j) P(b_j).}
Equation~\ref{eq4} is simply the statement that the vector $\vec{P}(A)$ with components $P(a_i)$ is obtained from the vector $\vec{P}(B)$ with components $P(b_j)$ by the action of $\bfp(A|B)$. Likewise for $B$, we have
\eq{}{P(b_j) = \sum_i P(b_j| a_i) P(a_i).}
By substitution, it follows that
\eq{InversionFallacy}{P(b_k) = \sum_{ij} P(b_k| a_i) P(a_i|b_j)P(b_j).}
Given $\bfp(A|B)$, equation~\ref{InversionFallacy} must hold for all choices of the probability distribution $\vec{P}(B)$. One natural conclusion would be that $\bfp(A|B)$ and $\bfp(B|A)$ are inverses, i.e.,
\eq{}{\sum_i P(b_k | a_i) P(a_i | b_j) = \delta_{kj}.}
However, the matrix inverse of $\bfp(A|B)$ will have negative values and values greater than 1 in general. Instead, given constant $\bfp(A|B)$, $\bfp(B|A)$ varies with $\vec{P}(B)$ according to Bayes' rule, in such a way that $\vec{P}(B)$ is always an eigenvalue of the matrix $\bfp(B|A) \bfp(A|B)$ with eigenvalue 1. This situation is particularly dissatisfying when one thinks of $\bfp(A|B)$ as describing the action of a channel, in which case $\bfp(B|A)$ would describe the result of running the channel backwards. In this case, Bayes' rule does not allow a simultaneous description of the forward channel and the reverse channel independent of the distribution that goes through it. One imagines that the matrix inverse might be a more appropriate description of the reverse channel than the matrix $\bfp(B|A)$ obtained via Bayes' rule.

The matrix inverse of $\bfp(A|B)$ shows up in other ways as well. For example, suppose we are given $\vec{P}(A)$ and $\bfp(A|B)$, and wish to infer $\vec{P}(B)$. This task is not possible using Bayes' rule, since $\bfp(B|A)$ depends on $\vec{P}(B)$; one requires a prior distribution. Nonetheless, $\vec{P}(B)$ may always be obtained from $\vec{P}(A)$ via the inverse of the matrix $\bfp(A|B)$:
\eq{InfUsingInv}{\vec{P}(B) = \bfp(A|B)^{-1} \vec{P}(A).}
Within the strict framework of Kolmogorov probability theory, one must view this procedure as a coincidence, since the matrix $P(A|B)^{-1}$ is not a conditional probability distribution.

As we show here, the application of $\bfp(A|B)^{-1}$ in this way may be explained using an alternative axiomatization of probability theory in which Bayes' rule is replaced by the requirement that
\eq{Inv}{\bfp(B|A) = \bfp(A|B)^{-1},}
referred to as the ``inversion rule.'' Moreover, the inversion rule is only one of a continuous set of possible alternative axiomatizations of probability theory, each of which comes with its own rule for statistical inference. In the second section, we provide the general axiomatic framework for probability theory that we will consider in the remainder of the paper. In the third section, we explore the specific axiom that gives rise to equation~\eqref{Inv} as a rule for inference, and show how it may be used alongside Bayes' rule in practice. Finally, in the fourth section, we characterize the set of all possible inference rules. We find that Bayes' rule and the inversion rule form a conjugate pair, and that they can be used to generate all other rules for inference.

\section{Axioms}
We seek to formulate a set of axioms that preserves the notion of a probability distribution in its entirety without Bayes' rule. Since Bayes' rule is a consequence of the symmetry of the intersection operator, it is necessary to replace Kolmogorov's axiomatization in terms of set theory by a similar axiomatization in terms of sequences. The primary hurdle will be to establish analogs of the union and intersection operators for sequences, which is necessary to facilitate comparison with the Kolmogorov axioms.

We consider the case of $N$ random variables $A^{(1)},\ldots,A^{(N)}$, where $A^{(i)}$ takes values $a^i_j \in J_i$, for $j = 1,\ldots,M_i$. Let $S$ be the set of all permutations of the integers from 1 to $N$. Given some $s \in S$, we denote by $E_s$ the set of all sequences $Q$ of $N$ elements such that the $i$th element in $Q$ is in $J_{s(i)}$. $Q$ is then said to have order $s$. Given sequences $Q$ and $Q'$, $Q$ is said to be a subsequence of $Q'$ if $Q$ may be obtained by removing elements from $Q'$. We denote by $F_s$ the set of all subsequences of elements of $E_s$. A sequence $Q$ is then said to have order $s$ if it is in $F_s$. For any sequence $Q$, we denote by $\bar{Q}$ the set of elements appearing at some position in $Q$, referred to as the ``membership set'' of $Q$. We write $Q \subseteq_s Q'$ if $Q$ and $Q'$ both have order $s$, and if $\bar{Q} \subseteq \bar{Q}'$.

For any $Q \in F_s$, we define $R_s(Q)$ to be the set of sequences in $E_s$ of which $Q$ is a subsequence, i.e.,
\eq{}{R_s(Q) \equiv \{Q' \in E_s | Q \subseteq_s Q'\}.}
We may now evidently define the operator $\cap_s$ such that for all $Q_1,Q_2 \in F_s$, 
\eq{}{Q_1 \cap_s Q_2 = \{Q' \in R_s(Q_1) \cap R_s(Q_2)\}.}
Likewise, we define the operator $\cup_s$ such that for all $Q_1$, $Q_2 \in F_s$,
\eq{}{Q_1 \cup_s Q_2 = \{Q' \in R_s(Q_1) \cup R_s(Q_2)\}.}

The axioms are then as follows, and are formulated for the greatest similarity to the original axioms of Kolmogorov:

\begin{axiom}\label{Kol1}
For all $s \in S$ and for all $Q \in E_s$, $P(Q)$ is a real-valued function referred to as the probability of $Q$.
\end{axiom}

\begin{axiom}\label{Kol2}
For all $s \in S$ and for all $Q \in F_s$,
\eq{}{P(Q) = \sum_{Q' \in R_s(Q)} P(Q').}
\end{axiom}
Crucially, we note that because $P$ is not explicitly a function of $s$, axiom~\ref{Kol2} implies that for all $s$,$s' \in S$ and for all $Q \in F_s \cap F_{s'}$,
\eq{CausalityConstraint}{\sum_{Q' \in R_s(Q)} P(Q')= P(Q) = \sum_{Q' \in R_{s'}(Q)} P(Q').}
Equation~\ref{CausalityConstraint} is equivalent to the statement that the marginal probability distribution over a subset of variables depends only on the marginal ordering of the variables in the subset.

\begin{axiom}\label{Kol3}
For all $s \in S$, we have
\eq{}{P(E_s)=1.}
\end{axiom}

\begin{axiom}\label{Kol4}
For all $Q_1,Q_2 \in F_s$ such that $R_s(Q_1) \cap R_s(Q_2) = \phi$,
\eq{}{P(Q_1 \cup_s Q_2) = P(Q_1) + P(Q_2).}
\end{axiom}

For any given value of $s$, the axioms~\ref{Kol1} through~\ref{Kol4} are identical to the standard statement of Kolmogorov's axioms without the requirement that $P(Q)$ be non-negative. Thus, the axioms are consistent, but not complete.

For the sake of concreteness, we will henceforth focus on the case of two variables, $A$ and $B$, so that there are only two orderings. Then a conditional probability may be easily defined\footnote{The conditional probability may be easily defined in the same way for any number of variables, but the notation required to treat the general case is very cumbersome, so has been avoided here.}:
\eq{}{P(a_i|b_j) \equiv \frac{P(a_i,b_j)}{P(b_j)}.}
Note that the ordering of the variables should be read off from right to left, so $P(a_i,b_j)$ is the probability of observing $a_i$ and $b_j$ relative to the ordering in which $B$ precedes $A$. This convention has been chosen to keep with the standard convention for conditional probabilities, according to which the conditioning variables are written on the right, and the conditioned variables are written on the left.

We are interested in our ability to perform statistical inference within this system of axioms. In the problem of statistical inference, we consider an observable variable $A$ and a hidden variable $B$. The goal is to determine a posterior for the hidden variable, $P(b_j | a_i)$, in terms of a model, $P(a_i | b_j)$, and a prior, $P(b_j)$. Within the current axiomatic system, the posterior distribution $P(b_j | a_i)$ is constrained in terms of $P(a_i | b_j)$ and $P(b_j)$ via equation~\ref{CausalityConstraint}, which applied to the current system becomes
\eq{CausalityConstraint2}{\sum_j P(a_i,b_j) = P(a_i) = \sum_j P(b_j, a_i).}
However, this constraint does not specify $P(b_j | a_i)$ uniquely in terms of $P(a_i | b_j)$. Thus, in order to perform statistical inference, it is necessary to impose an additional ``inference axiom'' that provides us with a rule for inferring $P(b_j | a_i)$ in terms of $P(a_i | b_j)$ and $P(b_j)$. Bayes' rule is one possible rule of inference, and may be recovered from the current axiomatization by requiring that the joint probability over some set of variables is independent of the ordering of those variables\footnote{In the general formalism, the axiom is that for all $Q \in E_s$ and $Q' \in E_{s'}$ such that $\bar{Q} = \bar{Q}'$, $P(Q) = P(Q')$.}, in which case $P(a_i,b_j) = P(b_j,a_i)$ for all $i$ and $j$, and hence
\eq{BayesRule}{P(b_j, a_i) = P(a_i | b_j) \frac{P(b_j)}{P(a_i)}.}
As we show below, however, there are many other choices of inference axioms that will nonetheless yield correct results.

In all that follows, we will assume that $\bfp(A|B)$ is invertible, which ensures that the problem of inference will be uniquely solvable. 

\section{The Inversion Rule}
Before examining the set of all possible inference axioms, it will be useful to examine one particular inference rule, the inversion rule, in depth. The inversion rule is given by equation~\eqref{Inv}, and is noteworthy for being the only inference rule in which $\bfp(B|A)$ is specified exclusively in terms of $\bfp(A|B)$. It is thus associated with the following inference axiom:

\begin{axiom}\label{IndependenceAx}
$\bfp(A|B)$ and $\bfp(B|A)$ may be specified independently of $\vec{P}(B)$ and $\vec{P}(A)$.
\end{axiom}

We prove that this axiom uniquely specifies the inversion rule as follows:

\begin{theorem}[Inversion Theorem]\label{InvThm}
For all non-product distributions $P(A,B)$, the matrices $\bfp(A|B) \bfp(B|A)$ and $\bfp(B|A) \bfp(A|B)$ are equal to the identities of size $N_A$ and $N_B$ respectively.
\end{theorem}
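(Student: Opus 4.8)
The plan is to show that Axiom~\ref{IndependenceAx} forces the marginal-consistency relations to hold not just for the marginal attached to the given $P(A,B)$, but for \emph{every} marginal compatible with the fixed conditional matrices, and that this overdetermination collapses $\bfp(A|B)\bfp(B|A)$ to the identity.

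First I would record the two marginal relations available in this axiomatic system. The definition of the conditional distribution together with the causality constraint~\eqref{CausalityConstraint2} gives $P(a_i)=\sum_j P(a_i|b_j)P(b_j)$ and $P(b_j)=\sum_i P(b_j|a_i)P(a_i)$, i.e. $\vec P(A)=\bfp(A|B)\vec P(B)$ (as in~\eqref{eq4}) and $\vec P(B)=\bfp(B|A)\vec P(A)$. The same constraint also shows that each conditional matrix is column-stochastic, since $\sum_i P(a_i|b_j)=1$ and $\sum_j P(b_j|a_i)=1$; hence $\mathbf 1^\top\bfp(A|B)=\mathbf 1^\top$ and $\mathbf 1^\top\bfp(B|A)=\mathbf 1^\top$. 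Substituting one relation into the other yields $\bfp(A|B)\bfp(B|A)\,\vec P(A)=\vec P(A)$, so the given $A$-marginal is a fixed point of $M:=\bfp(A|B)\bfp(B|A)$.

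Next I would exploit Axiom~\ref{IndependenceAx}. Since the conditionals may be specified independently of the marginals, I may hold $\bfp(A|B)$ and $\bfp(B|A)$ fixed and replace $\vec P(A)$ by an arbitrary normalized vector $\vec P'(A)$; column-stochasticity of $\bfp(B|A)$ guarantees that the induced $\vec P'(B)=\bfp(B|A)\vec P'(A)$ is again normalized, so the new configuration is a legitimate object of the theory. Admissibility of this configuration—again a consequence of~\eqref{CausalityConstraint2}—forces $M\,\vec P'(A)=\vec P'(A)$. Thus $M$ fixes every vector on the affine hyperplane $\{x:\mathbf 1^\top x=1\}$. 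Because that hyperplane does not pass through the origin, its vectors span $\mathbb R^{N_A}$; concretely $\mathbb R^{N_A}=\mathbb R\,\vec P(A)\oplus V_0$ with $V_0=\{x:\mathbf 1^\top x=0\}$, and $M$ acts as the identity on $V_0$ (differences of admissible marginals) and on $\vec P(A)$ (the fixed-point relation). Hence $M=\bfp(A|B)\bfp(B|A)=I_{N_A}$. Finally, since $\bfp(A|B)$ is invertible by the standing assumption, this gives $\bfp(B|A)=\bfp(A|B)^{-1}$, and therefore $\bfp(B|A)\bfp(A|B)=I_{N_B}$ as well.

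The step I expect to be the crux is the correct use of Axiom~\ref{IndependenceAx}: one must justify that freezing the conditionals while varying the marginal yields genuinely admissible configurations, so that the fixed-point equation propagates to a spanning family. This is exactly where the non-product hypothesis (with the standing invertibility assumption) is essential. For a product distribution $\bfp(A|B)$ collapses to the rank-one matrix $\vec P(A)\mathbf 1^\top$, whence $M=\vec P(A)\mathbf 1^\top$ and $M\vec P'(A)=\vec P(A)\neq\vec P'(A)$ in general; the configurations with varying marginals then fail the causality constraint, Axiom~\ref{IndependenceAx} cannot be met, and the spanning argument collapses. Non-product (hence full-rank, invertible) conditionals are precisely what make the family of admissible marginals full-dimensional, and the remaining linear algebra—promoting ``identity on $V_0$ together with the marginal'' to ``identity on $\mathbb R^{N_A}$''—is then routine.
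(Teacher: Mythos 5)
Your proof is correct and follows essentially the same route as the paper: derive $\bfp(A|B)\bfp(B|A)\vec{P}(A)=\vec{P}(A)$ from the causality constraint, then use Axiom~\ref{IndependenceAx} to hold the conditionals fixed while the marginal varies, forcing the product to be the identity. Your version merely makes explicit the spanning argument over the affine hyperplane of normalized vectors (and obtains the second identity via invertibility rather than by symmetry), which the paper leaves implicit.
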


\begin{proof}
From the definition of conditional probability, we have
\eq{}{P(a_i) = \sum_j P(a_i | b_j) P(b_j),}
and
\eq{}{P(b_j) = \sum_k P(b_j | a_k) P(a_k).}
From eq.~\eqref{CausalityConstraint}, we have
\eq{InvThm1}{P(a_i) = \sum_{j,k} P(a_i | b_j) P(b_j | a_k) P(a_k).}
Eq.~\eqref{InvThm1} may be recast in matrix form as
\eq{InvThm2}{\vec{P}(A) = \bfp(A|B) \bfp(B|A) \vec{P}(A).}
Equation~\ref{InvThm2} must hold for all choices of $\vec{P}(A)$. However, $\bfp(A|B) \bfp(B|A)$ is independent of $\vec{P}(A)$ by axiom~\ref{IndependenceAx}. It follows that $\bfp(A|B) \bfp(B|A)$ must be the identity. The proof for $\bfp(B|A) \bfp(A|B)$ is identical.
\end{proof}
Equation~\eqref{InvThm2} may in general still be applied even if $\bfp(A,B)$ does not have full rank by restricting the inversion to the support of $\bfp(A,B)$. However, when $\bfp(A,B)$ is a product distribution, all quantities associated with all other orderings of the variables are undefined. In the case of a product distribution, it is impossible to decouple $\bfp(A|B)$ from $\vec{P}(A)$ since $P(a_i|b_j) = P(a_i)$ for all $j$, so axiom~\ref{IndependenceAx} is inconsistent with the other axioms.

The joint distributions for the two different orderings may be related as follows:

\begin{theorem}\label{ReversalThm}
Let $\bfp(A,B)$ be a non-product joint distribution. Then,
\eq{}{\bfp(B,A) = \bfp(B) \bfp(A,B)^{-1} \bfp(A).}
\end{theorem}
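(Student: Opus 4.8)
The plan is to obtain the reversal formula as a short corollary of the Inversion Theorem, by rewriting each joint matrix as a conditional matrix times a diagonal matrix of marginals. First I would introduce the diagonal marginal matrices $\bfp(A) = \mathrm{diag}(P(a_1),\ldots,P(a_{N_A}))$ and $\bfp(B) = \mathrm{diag}(P(b_1),\ldots,P(b_{N_B}))$, so that the defining relation $P(a_i,b_j) = P(a_i|b_j)P(b_j)$ becomes, entrywise, a matrix factorization
\[
\bfp(A,B) = \bfp(A|B)\,\bfp(B),
\]
and applying the same definition to the reversed ordering, $P(b_j,a_i) = P(b_j|a_i)P(a_i)$, gives
\[
\bfp(B,A) = \bfp(B|A)\,\bfp(A).
\]
The only thing to verify at this stage is that the diagonal factor sits on the correct side so that the right index is contracted; one checks the $(i,j)$ entry of $\bfp(A|B)\,\bfp(B)$ directly, using that $\bfp(B)$ is diagonal.

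Next I would solve the first factorization for the conditional, writing $\bfp(A|B) = \bfp(A,B)\,\bfp(B)^{-1}$, which is legitimate provided $\bfp(B)$ is invertible. The Inversion Theorem then supplies the crucial input $\bfp(B|A) = \bfp(A|B)^{-1}$, since that theorem states $\bfp(A|B)\,\bfp(B|A)$ is the identity. Substituting this into the factorization of $\bfp(B,A)$ and simplifying the nested inverse yields
\[
\bfp(B,A) = \bfp(A|B)^{-1}\,\bfp(A) = \bigl(\bfp(A,B)\,\bfp(B)^{-1}\bigr)^{-1}\bfp(A) = \bfp(B)\,\bfp(A,B)^{-1}\,\bfp(A),
\]
using $(XY)^{-1} = Y^{-1}X^{-1}$ on the middle expression. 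This is precisely the claimed identity, so the computational core of the argument is just this one line of matrix algebra.

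I expect the substantive content to lie entirely in the hypotheses rather than in any calculation. The non-product assumption is inherited directly from the Inversion Theorem and is exactly what licenses the replacement $\bfp(B|A) = \bfp(A|B)^{-1}$; separately, the appearance of $\bfp(A)^{-1}$, $\bfp(B)^{-1}$, and $\bfp(A,B)^{-1}$ requires that no marginal $P(a_i)$ or $P(b_j)$ vanish (already implicit in defining the conditionals) and that $\bfp(A,B)$ be invertible (the standing assumption of the paper). I would therefore state these invertibility facts explicitly before the display, since without them both the final formula and the appeal to the Inversion Theorem would be unjustified. Granting them, the remaining obstacle is purely bookkeeping: keeping straight the right-to-left ordering convention and the diagonal structure of $\bfp(A)$ and $\bfp(B)$, so that each product has the correct dimensions $N_B \times N_A$ and contracts the intended index.
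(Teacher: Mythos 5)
Your proposal is correct and follows essentially the same route as the paper: factor $\bfp(A,B) = \bfp(A|B)\bfp(B)$ and $\bfp(B,A) = \bfp(B|A)\bfp(A)$, invert the first factorization, and substitute $\bfp(B|A) = \bfp(A|B)^{-1}$ from the Inversion Theorem. Your explicit attention to the invertibility hypotheses and index bookkeeping is a slight elaboration, but the computational core is identical to the paper's proof.
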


\begin{proof}
Combining~\eqref{InvThm2} with the definition of conditional probability, it is clear that
\eq{MainDeriv0}{\bfp(A|B) = \bfp(A,B) \bfp(B)^{-1}}
and
\eq{MainDeriv1}{\bfp(B|A) = \bfp(B,A) \bfp(A)^{-1}.}
Inverting~\eqref{MainDeriv0} and equating the left-hand sides of the two equations, we obtain
\[\bfp(B,A) \bfp(A)^{-1} = \bfp(B) \bfp(A,B)^{-1},\]
from which it follows that
\eq{MainDeriv}{\bfp(B,A) = \bfp(B) \bfp(A,B)^{-1} \bfp(A).}
\end{proof}
Clearly, if all the entries of $\bfp(A,B)$ are positive then the entries of $\bfp(B,A)$ will neither be strictly positive nor smaller than 1 in general.

A further interesting consequence of Theorem~\ref{ReversalThm} is that the function $P$ is completely specified by the joint distribution for only a single order, and hence contains the same amount of information as the joint distribution in the Kolmogorov framework. More generally, as long as the chosen inference axiom specifies $\bfp(B|A)$ completely in terms of $\bfp(A|B)$ and $\bfp(B)$, $\bfp(B,A)$ will be completely specified in terms of $\bfp(A,B)$. For this reason, we are always free to choose a preferred ordering of the variables, relative to which all joint, conditional, and marginal probabilities agree with the Kolmogorov probabilities:

\begin{corollary}\label{AgCor}
Let $P_K$ be a non-product probability distribution obeying the Kolmogorov axioms of probability. Then there exists a probability distribution $P_I$ obeying axioms~\ref{Kol1} through~\ref{Kol4} and~\ref{IndependenceAx} such that for all $i$ and $j$, $P_K(a_i,b_j) = P_I(a_i,b_j)$.
\end{corollary}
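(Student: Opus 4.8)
The plan is to construct $P_I$ explicitly from $P_K$ by declaring the two distributions to agree on one ordering and then using Theorem~\ref{ReversalThm} to fix the joint distribution on the reversed ordering. Concretely, I would set $\bfp_I(A,B) = \bfp_K(A,B)$ (the ordering in which $B$ precedes $A$, so that $P_I(a_i,b_j)=P_K(a_i,b_j)$ as required), read off the marginals $\vec{P}(A)$ and $\vec{P}(B)$ as the row and column sums of this matrix, and collect them into the diagonal matrices $\bfp(A)$ and $\bfp(B)$. Since $P_K$ is non-product, the reversed joint distribution is well defined: the standing invertibility assumption on $\bfp(A|B)$ together with positive marginals makes $\bfp_I(A,B)$ invertible (or one restricts to its support, as noted after Theorem~\ref{InvThm}), so I may define $\bfp_I(B,A) \equiv \bfp(B)\,\bfp_I(A,B)^{-1}\,\bfp(A)$, exactly the formula of Theorem~\ref{ReversalThm}. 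This assigns $P_I(Q)$ to every sequence $Q$ of either order.

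It then remains to verify that this $P_I$ satisfies Axioms~\ref{Kol1}--\ref{Kol4} and Axiom~\ref{IndependenceAx}. Axiom~\ref{Kol1} is immediate, since every quantity is a real matrix entry and the axioms do not forbid negative values. The substantive step -- and the one I expect to be the main obstacle -- is Axiom~\ref{Kol2} in the form of the causality constraint~\eqref{CausalityConstraint2}: the marginals obtained from $\bfp_I(B,A)$ must coincide with those obtained from $\bfp_I(A,B)$. This is where the content lies, because a priori the defining formula for $\bfp_I(B,A)$ need not produce consistent marginals. I would establish it by multiplying by the all-ones vector $\mathbf{1}$ and invoking the marginal identities $\vec{P}(B)^{T} = \mathbf{1}^{T}\bfp_I(A,B)$ and $\vec{P}(A) = \bfp_I(A,B)\,\mathbf{1}$; the factor $\bfp_I(A,B)^{-1}$ then cancels against $\bfp_I(A,B)$, giving $\mathbf{1}^{T}\bfp_I(B,A) = \vec{P}(A)^{T}$ and $\bfp_I(B,A)\,\mathbf{1} = \vec{P}(B)$, which are precisely the two halves of~\eqref{CausalityConstraint2}.

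With marginal consistency in hand, the remaining axioms follow quickly. Axiom~\ref{Kol3} reduces to $\sum_{ij}P_I(b_j,a_i) = \mathbf{1}^{T}\bfp_I(B,A)\,\mathbf{1} = \sum_i P(a_i) = 1$ by the previous step, while normalization on the unreversed ordering is inherited from $P_K$. Axiom~\ref{Kol4} follows from the additivity already present in $P_K$ on one ordering and from the linearity of the defining formula on the other. Finally, for Axiom~\ref{IndependenceAx} I would compute $\bfp_I(A|B) = \bfp_I(A,B)\,\bfp(B)^{-1}$ and $\bfp_I(B|A) = \bfp_I(B,A)\,\bfp(A)^{-1} = \bfp(B)\,\bfp_I(A,B)^{-1}$, whose product is the identity; hence $\bfp_I(B|A) = \bfp_I(A|B)^{-1}$ is fixed by $\bfp_I(A|B)$ alone and makes no reference to $\vec{P}(A)$ or $\vec{P}(B)$. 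Because $P_K$ is non-product, this inversion rule is consistent with the other axioms -- exactly the regime in which Theorem~\ref{InvThm} shows Axiom~\ref{IndependenceAx} to hold -- completing the verification.
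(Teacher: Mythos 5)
Your construction is exactly the one the paper intends: the corollary is stated as a direct consequence of Theorem~\ref{ReversalThm}, with $P_I$ defined by copying $P_K$ on one ordering and using $\bfp_I(B,A)=\bfp(B)\,\bfp_I(A,B)^{-1}\,\bfp(A)$ on the other, and the paper leaves the verification implicit. Your proposal is correct and in fact supplies the marginal-consistency check (via $\mathbf{1}^{T}\bfp_I(B,A)=\vec{P}(A)^{T}$ and $\bfp_I(B,A)\,\mathbf{1}=\vec{P}(B)$) that the paper omits.
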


Corollary~\ref{AgCor} allows us to use the inversion rule to perform statistical inference on Kolmogorov probabilities as follows. Given a model $\bfp_K(A|B)$ and an empirical estimate $\tilde{P}_K(A)$ for $\vec{P}_K(A)$, we consider the probability distribution $\bfp_I(A,B)$ in the inversion framework, defined such that $\bfp_I(A,B) = \bfp_K(A,B)$. Then, the best empirical estimate for $\tilde{P}_I(B)$ using the inversion rule is given by
\begin{align}
\tilde{P}_I(B) &= \bfp_I(B|A) \tilde{P}_I(A)\nonumber\\
&= \bfp_I(A|B)^{-1} \tilde{P}_I(A)\nonumber\\
&= \bfp_K(A|B)^{-1} \tilde{P}_K(A)
\end{align}

Furthermore, in the limit as $\tilde{P}_K(A) \rightarrow \vec{P}_K(A)$, the estimate $\tilde{P}_I(B)$ obviously converges to $\vec{P}_I(B)$. Because $\vec{P}_I(B) = \vec{P}_K(B)$, we may interpret $\tilde{P}_I(B)$ as an approximation to $\tilde{P}_K(B)$:
\eq{KolApprox1}{\tilde{P}_K(B) = \bfp_K(A|B)^{-1} \tilde{P}_K(A).}
Equation~\ref{KolApprox1} cannot be justified on the basis of the axioms of Kolmogorov probability theory. In the Kolmogorov framework, equation~\ref{InfUsingInv} can only be used to relate the specific distributions $\vec{P}_K(A)$ and $\vec{P}_K(B)$. Because $\bfp_K(A|B)^{-1}$ is not a conditional probability distribution, there is no guarantee that $\tilde{P}_K(B)$ obtained using~\eqref{KolApprox1} is a probability distribution. Nonetheless, using the inversion framework, we have proven the validity of~\eqref{KolApprox1} as a method of performing inference.

One interpretation of the preceding discussion is that it is possible to use the inversion rule to infer a prior from the data. Surprisingly, because the inversion rule and Bayes' rule are independent, one may then apply Bayes' rule to do Bayesian inference using the inferred prior. We have

\begin{align}
\tilde{\bfp}_K(B|A) &= \tilde{\bfp}(B) \bfp_K(A|B)^T \tilde{\bfp}(A)^{-1}\nonumber\\
&= \text{Diag}\left[\bfp_K(A|B)^{-1} \tilde{P}(A)\right]\bfp_K(A|B)^T \tilde{\bfp}(A)^{-1}
\label{InfEq}
\end{align}
Here, $\text{Diag}\left[\vec{v}\right]$ is the diagonal matrix with the entries of $\vec{v}$ along the diagonal. The estimate $\tilde{\bfp}_K(B|A)$ converges to $\bfp_K(B|A)$ as $\tilde{P}(A) \rightarrow \vec{P}(A)$, so $\tilde{P}(B)$ is guaranteed to be a ``good'' prior, even if it is not a Kolmogorov probability distribution.

\section{All Possible Inference Axioms}
We now examine the set of all possible inference axioms that can be added to axioms~\ref{Kol1} through~\ref{Kol4}. An inference axiom is any way of specifying $\bfp(B|A)$ in terms of $\bfp(A|B)$ and $\vec{P}(B)$. For an inference axiom to be consistent with axioms~\ref{Kol1} through~\ref{Kol4}, the matrix $\bfp(B|A)$ must satisfy equation~\ref{CausalityConstraint2}. For equation~\ref{CausalityConstraint2} to be satisfied, the columns of $\bfp(B|A)$ must sum to 1, and the marginal distributions must not depend on the order of the joint distribution over which one chooses to marginalize. From this latter requirement and the definition of conditional probabilities, we derive the requirement that

\eq{AxiomReq}{\vec{P}(B) = \bfp(B|A) \bfp(A|B) \vec{P}(B).}
Multiplying both sides by $\bfp(B|A)^{-1}$, we obtain

\eq{}{\bfp(B|A)^{-1} \vec{P}(B) = \bfp(A|B) \vec{P}(B).}
Because the right-hand side is equal to $\vec{P}(A)$, we can multiply both sides by $\bfp(A)^{-1}$, obtaining
\eq{Axiomeq}{\bfp(A)^{-1} \bfp(B|A)^{-1} \vec{P}(B) = \vec{1}_A,}
where $\vec{1}_A$ is the vector of $1$s with support equal to the support of $\vec{P}(A)$. We define $\bfr$ by
\eq{Rdef}{\bfp(B|A)^{-1} =\bfp(A) \bfr\; \bfp(B)^{-1}.}
Substituting equation~\eqref{Rdef} into~\eqref{Axiomeq}, we obtain
\eq{REigEq}{\bfr \vec{1}_B = \vec{1}_A.}
For any matrix $\bfr$ obeying~\eqref{REigEq} for all choices of $\bfp(B)$ and $\bfp(A|B)$, there is definition of $\bfp(B|A)$ that satisfies~\eqref{CausalityConstraint2}, given by
\eq{QEqn}{\bfp(B|A) = \bfp(B) \bfr^{-1} \bfp(A)^{-1}.}
In retrospect, equation~\ref{QEqn} is obvious. For all choices of $\bfp(A)$ and $\bfp(B)$, $\bfp(B|A)$ must map $\vec{P}(A)$ onto $\vec{P}(B)$. This requirement is clearly satisfied only if the matrix $\bfr$ given in~\eqref{QEqn} maps $\vec{1}_B$ onto $\vec{1}_A$. For the columns of $\bfp(B|A)$ to sum to 1, we further require\footnote{One example of a definition for $\bfp(B|A)$ that obeys~\eqref{CausalityConstraint2} but has columns that do not sum to 1 can be obtained in the case of $N_A = N_B$ by setting $\bfr$ equal to $\bfp(A|B)^T$.}
\eq{SecondRequirement}{\bfp(B|A)^T \vec{1}_B = \vec{1}_A.}
From~\eqref{QEqn}, this is equivalent to the statement that
\[\vec{P}(A) = (\bfr^T)^{-1}\vec{P}(B),\]
or equivalently,
\eq{REigEq2}{\bfr^T \vec{P}(A) = \vec{P}(B),}
so $\bfr$ must itself be the transpose of a conditional probability distribution. The challenge is now to enumerate the set of matrices $\bfr$ that satisfy both~\eqref{REigEq} and~\eqref{REigEq2}. We can categorize the choices by the order of their dependency on $\bfp(A|B)$.

\subsection{Zeroth-order axioms}
It is possible to specify $\bfp(B|A)$ entirely independently of $\bfp(A|B)$, while still satisfying both of the requirements of~\eqref{CausalityConstraint2}. One such specification involves setting all of the columns of $\bfp(B|A)$ to be equal to $\vec{P}(B)$, or equivalently, setting $\bfr^{-1}$ to be the matrix in which all of the rows are given by $\vec{P}(A)$. However, this axiom is uninteresting from the perspective of inference, since it is impossible to infer $\vec{P}(B)$ without knowing $\vec{P}(B)$ to begin with. The corresponding rule says, if $\vec{P}(B)$ is already known, one can simply ignore all measurements.

\subsection{First-order axioms}

The set of first-order axioms is a one-dimensional convex hull, the two extreme points of which form a conjugate pair. The first extreme point is obtained by setting $\bfr$ equal to $(P(A|B)^T)^{-1}$. Applying~\eqref{QEqn}, we recover Bayes' rule:
\eq{BayesRuleAgain}{\bfp(B|A) = \bfp(B) \bfp(A|B)^T \bfp(A)^{-1}.}
The matrix $\bfp(B|A)^T$ given in~\eqref{BayesRuleAgain} also maps $\vec{1}_B$ onto $\vec{1}_A$, and also satisfies~\eqref{REigEq2}, so it is also a candidate for $\bfr$. This results in the second axiom:
\eq{}{\bfr = \bfp(A)^{-1} \bfp(A|B) \bfp(B)}
Applying~\eqref{QEqn}, we find the inversion rule,
\eq{InvRuleAgain}{\bfp(B|A) = \bfp(A|B)^{-1}.}
The conjugacy of Bayes' rule and the inversion rule is now obvious. Denoting by $\bfr_K$ the axiom specifying Bayes' rule and by $\bfr_I$ the axiom specifying the inversion rule, we have
\eq{}{\bfp(B|A)_K^T = \bfr_I,}
and likewise,
\eq{}{\bfp(B|A)_I^T = \bfr_K.}
The set of all first-order axioms may then be obtained by noting that any convex combination of first-order axioms is also a first-order axiom, so the set of first-order axioms is a convex hull. Since there are no other matrix functions first order in $P(A|B)$ that obey~\eqref{REigEq} and~\eqref{REigEq2}, the entire set of first-order axioms may be parametrized by
\eq{}{\bfr = p \bfp(A)^{-1} \bfp(A|B)\bfp(B) + (1-p) (\bfp(A|B)^T)^{-1},}
for $p \in [0,1]$.

\subsection{Higher-order Axioms}
Just as any convex combination of $\bfr$ matrices is also a $\bfr$ matrix, any product of $\bfr$ matrices and $\bfr$-matrix inverses is a $\bfr$ matrix as well. To see this, note that if $\bfr$ maps $\vec{1}_B$ to $\vec{1}_A$, then $\bfr^{-1}$ maps $\vec{1}_A$ to $\vec{1}_B$. Likewise, if $\bfr^T$ maps $\vec{P}(A)$ to $\vec{P}(B)$, then $(\bfr^T)^{-1}$ maps $\vec{P}(B)$ to $\vec{P}(A)$. The set of higher-order axioms may thus be generated by combining different first-order $\bfr$ matrices in the pattern 
\eq{}{\bfr = \bfr_1 \bfr_2^{-1} \bfr_3\ldots,}
corresponding to the rule
\eq{}{\bfp(B|A) = \bfp(B|A)_1 \bfp(B|A)_2^{-1} \bfp(B|A)_3,}
where $\bfp(B|A)_i$ is the conditional probability associated with $\bfr_i$. Evidently, rules only exist at odd orders, and clearly the set of $n$th-order rules is the convex hull of the $n$th-order product set of the set of first-order rules. As an example of a rule at higher-order, we have

\begin{align}
\bfr&= \bfr_K \bfr_I^{-1} \bfr_K\nonumber\\
&=(\bfp(A|B)^T)^{-1} \paren{\bfp(B)^{-1} \bfp(A|B)^{-1} \bfp(A)} (\bfp(A|B)^T)^{-1},\label{ThirdOrderTheory}
\end{align}
which implies
\eq{}{\bfp(B|A) = \bfp(B) \bfp(A|B)^T \bfp(A)^{-1} \bfp(A|B) \bfp(B) \bfp(A|B)^T \bfp(A)^{-1}.}

This particular rule has the property that it discounts the evidence in favor of the prior and the model, as is evident from the fact that in this system, $\bfp(B|A)$ depends only at first order on $\bfp(A)$, while it depends on $\bfp(B)$ at second order and $\bfp(A|B)$ at third order. Although it is more costly to implement computationally, it also displays some properties that may be useful for machine learning applications. For example, unlike for Bayes' rule or the inversion rule, the columns of $\bfp(B|A)$ will not in general sum to 1 unless the correct values of $\bfp(B)$ and $\bfp(A)$ are used in the calculation, thus providing a test of convergence.

\section{Conclusion}

We have shown that the axiomatization of probability theory in terms of set theory originally formulated by Kolmogorov imposes strong conditions on the form of the reverse conditional probabilities. These conditions ensure desirable properties required for consistency with the frequentist interpretation of probability theory, including positivity and correct behavior when $P(A,B)$ is a product distribution or not invertible. However, they are not necessary for correct statistical inference. Within a more general axiomatization, one can use alternative rules for statistical inference in parallel with Bayes' rule, leading to unexpected new ways of performing inference. As a result, alternative rules of inference may prove more useful than Bayes' rule for some applications.

In addition, we ask whether there may be physical circumstances in which an alternative axiomatization of probability theory would be preferred to the Kolmogorov theory. For example, it is known that although observable probabilities are always positive, the underlying distributions describing quantum mechanical states do not necessarily obey positivity~\cite{Wigner32}. Indeed, the violation of positivity by quantum states has been shown to be a fundamental distinguishing property of quantum mechanics~\cite{Spekkens08}. It may be that for describing these quantum mechanical probability distributions, which are not necessarily subject to a standard frequentist interpretation, an alternative axiomatization of probability is more suitable.

\bibliographystyle{plainnat}
\bibliography{ProbabilityCitations}






\end{document}